\theoremstyle{plain} 
\newtheorem{theorem}{Theorem}[section]
\newtheorem{lemma}[theorem]{Lemma}
\newtheorem{corollary}[theorem]{Corollary}
\newtheorem{proposition}[theorem]{Proposition}
\theoremstyle{definition} 
\newtheorem{definition}[theorem]{Definition}
\theoremstyle{remark} 
\newtheorem{remark}[theorem]{Remark}
\newcommand{\bbC}{\mathbb{C}}
\newcommand{\bbN}{\mathbb{N}}
\newcommand{\bfm}{\mathbf{m}}
\newcommand{\mcP}{\mathcal{P}}
\begin{document}
\title{Algebraic independence of multipliers of periodic orbits in the space of polynomial maps of one variable}
\author{Igors Gorbovickis}
\maketitle

\begin{abstract}
We consider a space of complex polynomials of degree $n\ge 3$ with $n-1$ distinguished periodic orbits. We prove that the multipliers of these periodic orbits considered as algebraic functions on that space, are algebraically independent over~$\bbC$.
\end{abstract}

\section{Introduction}
Consider a monic polynomial of degree $n\ge 3$ and its $n-1$ distinct periodic orbits. 
If these periodic orbits are of multiplicity $1$, then by means of the implicit function theorem their multipliers can be analytically continued to $n-1$ algebraic functions defined on some ramified cover over the space of monic polynomials of degree $n$. In this paper we show that these $n-1$ functions are algebraically independent over $\bbC$. In other words, they do not satisfy any polynomial relation with complex coefficients. In fact, we prove a slightly more general statement. Precise definitions follow.

The above mentioned result in a certain way extends the result of Yu.~Zarhin~\cite{Zarhin_1}, and answers the question formulated in Remark~1.10 of the same paper. In~\cite{Zarhin_1} the independence of multipliers is proved with a strong restriction on the \emph{periods} of the corresponding orbits. More precisely, in~\cite{Zarhin_1} it is shown that if $m_1,\dots,m_k$ are periods of some $k$ distinct periodic orbits of the polynomial $p_0(z)=z^n$, where $n\ge 3$, and the following two conditions are satisfied:

(i) $m_1+\dots+m_k\le n$,

(ii) if $m_j=1$ for some $j$, then additionally $m_1+\dots+m_k\le n-1$,

\noindent then the multipliers of those $k$ periodic orbits considered as locally analytic functions on the space of monic polynomials of degree $n$, have linearly independent differentials. Since the multipliers are (multiple valued) algebraic functions of the polynomial, this immediately implies their algebraic independence over $\bbC$ provided that conditions~(i) and~(ii) are satisfied. 



On the other hand, in~\cite{Zarhin_1}~(Remark~1.10) it is shown that even for two periodic orbits of $p_0(z)=z^n$, if conditions (i) and~(ii) are not satisfied, then it is possible for the two multipliers to have linearly dependent differentials at the polynomial $p_0(z)$. It is also questioned whether in such cases the two multipliers are still algebraically independent? Since we prove independence of multipliers with no restrictions on the periods of the corresponding orbits, our result gives a positive answer to this question. 

We prove our main result in the following way: as it was mentioned in the beginning, the choice of $n-1$ distinct non-multiple periodic orbits of a polynomial defines an irreducible algebraic set that is a ramified cover over the space of monic polynomials of degree $n$. The multipliers that we consider, are algebraic functions on that algebraic set. We show that at some point on that set the differentials of the multipliers are linearly independent, which implies the desired algebraic independence of the multipliers. 
We also find it intriguing, whether our result can be obtained using A.~Epstein's transversality theory~\cite{Epstein_1}.



\subsection{The space of polynomials with $k$ distinguished periodic orbits}
It is easy to see that even though the change of coordinates $z\mapsto z+c$ in the domain of the polynomial, modifies the polynomial, it only shifts all periodic points by $c$ and does not change their multipliers. Thus we will consider two polynomials to be equivalent, if there exists a change of coordinates of the form $z\mapsto z+c$ that brings one polynomial to another. The factor space of all monic polynomials of degree $n$ factored by this equivalence relation can be identified with so called centered polynomials.

\begin{definition}
By $\mcP^n\subset\bbC[z]$ we denote the set of centered monic polynomials of degree $n$, which are monic polynomials of degree $n$ with the term of degree $n-1$ being equal to zero.
\end{definition}
\begin{definition}
We say that a periodic orbit is of period $m$, if this periodic orbit consists of $m$ \emph{distinct} points. A point of period $m$ is a point that belongs to a periodic orbit of period $m$.
\end{definition}

Consider a polynomial $p\in \mcP^n$ and its $k$ non-multiple periodic points $z_1, \dots, z_k$ belonging to \textit{different} periodic orbits of (minimal) periods $m_1,\dots, m_k$ respectively. By $\bfm$ denote the vector of periods $\bfm=(m_1,\dots,m_k)$. With any such polynomial and its periodic points belonging to different periodic orbits, one can associate the set $M^n_\bfm$ defined in the following way:
\begin{definition}\label{Mnm_def}
The set $M^n_\bfm=M^n_\bfm(p,z_1,\dots,z_k)$ is the maximal irreducible analytic subset of $\mcP^n\times\bbC^k$, such that

(i) $(p,z_1,\dots,z_k)\in M^n_\bfm$;

(ii) For $(q,w_1,\dots,w_k)\in M^n_\bfm$, the points $w_1,\dots,w_k$ satisfy the equations $q^{\circ m_j}(w_j)=w_j$, for any $j=1,2,\dots,k$.
\end{definition}
\begin{remark}\label{irred_rem}
It immediately follows from the definition that $M^n_\bfm=M^n_\bfm(p,z_1,\dots, z_k)$ is an irreducible algebraic set. 
\end{remark}

A priori it is not obvious whether the sets $M^n_\bfm$ can be different for different initial choices of $(p, z_1,\dots, z_k)$. In Section~\ref{Perm_sec} we will prove the following lemma, which says that all these sets are the same. 
\begin{lemma}\label{Mnkm_lemma}
The set $M^n_\bfm$ is independent of the initial choice of the polynomial $p\in\mcP^n$ and its periodic points $z_1,\dots,z_k$. Thus, the set $M^n_\bfm$ is completely determined by the integer $n$, and the vector $\bfm$.
\end{lemma}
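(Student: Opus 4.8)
The plan is to show that any two of the candidate sets $M^n_\bfm(p,z_1,\dots,z_k)$ and $M^n_\bfm(p',z_1',\dots,z_k')$ coincide, by connecting the two initial data points through a path along which the periodic points vary continuously. Concretely, I would first argue that the set $\mcP^n\times\bbC^k$ contains a distinguished irreducible algebraic subset $\widetilde M$ cut out by the equations $q^{\circ m_j}(w_j)=w_j$ together with the conditions that $w_j$ has \emph{minimal} period exactly $m_j$ and that the $w_j$ lie on distinct orbits; the candidate sets from Definition~\ref{Mnm_def} are, by construction, irreducible components of (the closure of) this locus, so it suffices to prove that this locus has a single component containing all admissible triples $(q,w_1,\dots,w_k)$.

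The key step is a monodromy/connectedness argument. Fix $n$ and $\bfm$. Consider the incidence variety $Z=\{(q,w_1,\dots,w_k)\in\mcP^n\times\bbC^k : q^{\circ m_j}(w_j)=w_j\ \forall j\}$ and let $Z^\circ\subset Z$ be the Zariski-open subset where each $w_j$ is a \emph{simple} root of $q^{\circ m_j}(w)-w$ of exact period $m_j$, lying on pairwise distinct orbits. The projection $\pi:Z^\circ\to\mcP^n$ is a finite covering map over the open set $U\subset\mcP^n$ of polynomials all of whose relevant periodic orbits are non-multiple and distinct. It then suffices to show that the monodromy action of $\pi_1(U)$ on a fiber $\pi^{-1}(q)$ is transitive on the set of $k$-tuples of the prescribed combinatorial type: transitivity of the monodromy is equivalent to irreducibility of the corresponding component of $Z^\circ$, hence forces all the $M^n_\bfm$'s to be equal. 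To get transitivity I would pick a convenient base polynomial — e.g.\ $q(z)=z^n+c$ for suitable $c$, or a polynomial whose Julia-set dynamics is well understood — and move $c$ (and more generally deform within $\mcP^n$) to realize, as monodromy, first arbitrary permutations of the $m_j$ points \emph{within} a single orbit of period $m_j$ (this is classical: looping a parameter around a value where the orbit degenerates cyclically permutes the orbit), and second arbitrary permutations \emph{among} different orbits of the same period (achieved by looping around parameters where two such orbits collide). Since the combinatorial type is preserved exactly by the subgroup generated by these moves, the fiber over $q$ consisting of admissible tuples is a single monodromy orbit.

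The main obstacle I anticipate is the second kind of monodromy move: producing, inside the one-parameter family, an honest collision of two \emph{distinct} periodic orbits of the same period in a way that I can control, and verifying that the local monodromy around such a collision is exactly a transposition of the two orbits rather than something more degenerate (a tangency of higher order, or a collision that also changes the period). This requires choosing the base family and the colliding parameter carefully, and checking a local normal form — typically that near the collision the two orbits are the two branches of $\sqrt{c-c_0}$, so the loop swaps them. A secondary technical point is to ensure $U$ is connected (it is the complement in $\mcP^n$ of finitely many proper algebraic subsets, hence connected) and that $Z^\circ$ is smooth over $U$ so that the covering-space formalism applies; both are routine. Once transitivity is established, irreducibility of the ambient component and Remark~\ref{irred_rem} give that $M^n_\bfm$ does not depend on the chosen initial data, which is the assertion of the lemma.
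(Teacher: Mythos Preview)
Your outline is the same strategy the paper uses: reduce the lemma to transitivity of monodromy on admissible $k$-tuples, work in the unicritical slice $\{z^n+c\}$, and manufacture the needed permutations there. Two points deserve sharpening, and they are exactly where the paper puts its weight.

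First, the ``main obstacle'' you flag --- producing and controlling collisions of two distinct period-$m$ orbits so as to generate the full symmetric group --- is not something the paper works out by hand. It is a theorem of Schleicher that for the family $z^n+c$ the monodromy action on period-$m$ orbits is the full symmetric group, and moreover any cyclic permutation within each orbit can be realized independently. Citing that result removes your obstacle cleanly; trying to reprove it from local normal forms around parabolic parameters would be considerably more work than you suggest.

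Second, you do not explicitly treat the case where the $m_j$ are not all equal, and this is a genuine gap in the sketch. A loop in $\mcP^n_0$ chosen to permute period-$m_1$ orbits could a priori also encircle a bifurcation point for period $m_2$ and scramble those orbits as well. The paper handles this with a short proposition: the sets $X'_{m_1}$ and $X'_{m_2}$ of parameters $c$ at which $z^n+c$ can spawn a new period-$m_1$ (resp.\ $m_2$) orbit are disjoint when $m_1\neq m_2$, because $z^n+c$ has a single critical point and hence at most one parabolic cycle, and a parabolic cycle cannot simultaneously give rise to new orbits of two different periods. With this disjointness, permutations within each period class can be achieved independently, and your argument goes through.
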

Let $\pi\colon M^n_\bfm\to \mcP^n$ be the natural projection
$$
\pi\colon (q, w_1, \dots,w_k)\mapsto q.
$$
Together with the projection $\pi$ the set $M^n_\bfm$ is a ramified cover over $\mcP^n$. 

\subsection{The multiplier map}
We define the map $\Lambda\colon M^n_\bfm\to\bbC^2$ that with every point $(p,z_1,\dots,z_k)\in M^n_\bfm$ associates the vector of multipliers of periodic points $z_1,\dots, z_k$:
$$
\Lambda\colon (p,z_1,\dots,z_k)\mapsto ((p^{\circ m_1}(z_1))', (p^{\circ m_2}(z_2))',\dots,(p^{\circ m_k}(z_k))').
$$

If we fix a system of coordinates $a=(a_0,\dots,a_{n-2})$ in $\mcP^n$, naturally related to the coefficients of the polynomials so that a vector $a=(a_0,\dots,a_{n-2})\in\bbC^{n-1}$ is identified with the polynomial $p_a(z)=z^n+a_{n-2}z^{n-2}+a_{n-3}z^{n-3}+\dots+a_1z+a_0$, then coordinates $a=(a_0,\dots,a_{n-2})$ can be viewed as local coordinates on $M^n_\bfm$ around every point $(q,z_1,\dots,z_k)\in M^n_\bfm$, which is not a point of ramification. In particular, at every such point we can consider the derivative $\frac{d\Lambda}{da}=\frac{d\Lambda(p_a,z_1(a),\dots,z_k(a))}{da}$.

Now we are ready to formulate our main theorem:
\begin{theorem}\label{main_th}
Assume $n\ge 2$ and $k\le n-1$. Then for any vector $\bfm\in\bbN^k$ and any $k$-tuple of indexes $j_1,\dots,j_k$ satisfying $0\le j_1<j_2<\dots<j_k\le n-2$, the equation 
$$
\det\left(\frac{d\Lambda}{d(a_{j_1},\dots a_{j_k})}\right)=0
$$
defines an algebraic subset of codimension $1$ in $M^n_\bfm$.
\end{theorem}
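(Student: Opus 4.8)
\emph{Proof strategy.} Set $D=\det\bigl(\tfrac{d\Lambda}{d(a_{j_1},\dots,a_{j_k})}\bigr)$; this is a rational function on $M^n_\bfm$, regular on the complement of the ramification locus of $\pi$. Since $M^n_\bfm$ is irreducible (Remark~\ref{irred_rem}), the zero set of $D$ is either all of $M^n_\bfm$, when $D\equiv0$, or a proper closed subset, in which case — being cut out by the single equation $D=0$ — it has pure codimension one wherever it is non-empty. The whole theorem therefore reduces to showing $D\not\equiv0$; that the zero set is actually non-empty (so the codimension is exactly one) is a separate, easier point. Crucially, by Lemma~\ref{Mnkm_lemma} the set $M^n_\bfm$, hence also the function $D$, is independent of the data used to define it, so we may evaluate $D$ at any point of $M^n_\bfm$ that we like. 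The plan is: (1) choose a convenient polynomial $p\in\mcP^n$ with periodic points $z_1,\dots,z_k$ of periods $m_1,\dots,m_k$ lying in distinct orbits; (2) compute the Jacobian $\tfrac{d\Lambda}{da}$ at the corresponding point; (3) verify that the $(j_1,\dots,j_k)$ minor is nonzero.

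For step~(2): near a non-ramification point the periodic points $z_j(a)$ depend holomorphically on $a=(a_0,\dots,a_{n-2})$ — here one uses non-multiplicity, i.e.\ $\lambda_j\ne1$, so that $p_a^{\circ m_j}(w)-w$ has a simple root at $w=z_j$ — and the implicit function theorem gives $\partial_{a_l}z_j=-(\lambda_j-1)^{-1}\,\partial_{a_l}\bigl(p_a^{\circ m_j}(w)-w\bigr)\big|_{w=z_j}$. Writing $\zeta_{j,i}=p_a^{\circ i}(z_j)$ for the $j$-th orbit and using $\partial_{a_l}p_a(z)=z^l$, logarithmic differentiation of $\lambda_j=\prod_{i=0}^{m_j-1}p_a'(\zeta_{j,i})$ combined with the chain rule around the orbit expresses $\partial_{a_l}\lambda_j$ as $\lambda_j$ times a universal rational function $R_{j,l}$ of the points $\zeta_{j,0},\dots,\zeta_{j,m_j-1}$ and of $p,p',p''$ evaluated along the orbit — equivalently, as a residue of a rational $1$-form attached to that orbit. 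Thus the $j$-th row of the Jacobian depends only on the $j$-th orbit, and the minor acquires a "block" structure built from $k$ pairwise disjoint orbits.

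Step~(3) is the crux, and is precisely where the paper improves on \cite{Zarhin_1}. For the naive choice $p_0(z)=z^n$ the orbit points are roots of unity, the $R_{j,l}$ become explicit, and the minor reduces to the determinant analysed by Zarhin, which is nonzero exactly under his conditions (i)--(ii); without those conditions the first-order data at $z^n$ is degenerate. I would therefore either (a) replace $z^n$ by a sufficiently generic $p\in\mcP^n$ realizing the periods $\bfm$ — such $p$ exist, since for generic $p$ the polynomial $p^{\circ m}(w)-w$ has simple roots and hence orbits of every period — and show that at a suitable point the universal $k\times k$ matrix $(R_{j,j_i})$ is nonsingular by a Vandermonde-type computation; or (b) keep $z^n$, pass to an auxiliary family $p_t\to z^n$, and read off the leading coefficient of $D(p_t)$ as $t\to0$, reducing $D\not\equiv0$ to the nonvanishing of an explicit polynomial in roots of unity that persists even when (i)--(ii) fail. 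The main obstacle I anticipate is exactly this last step: with no restriction on the periods, contributions from different orbits can resonate so that the first-order determinants one naturally writes down all vanish, and a genuinely higher-order (or cleverly deformed) analysis is needed — one that is moreover uniform in $\bfm$ and in the choice $j_1<\dots<j_k$. Controlling that interference is the heart of the matter.
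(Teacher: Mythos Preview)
Your reduction is exactly right and matches the paper: by irreducibility of $M^n_\bfm$ (Remark~\ref{irred_rem}) and Lemma~\ref{Mnkm_lemma}, it suffices to exhibit \emph{one} point at which the minor is nonzero. Your step~(2) is also essentially what the paper does in Lemma~\ref{comp_deriv_lemma}.

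The genuine gap is in step~(3). You assert that at $p_0(z)=z^n$ ``the first-order data \dots\ is degenerate'' whenever Zarhin's conditions (i)--(ii) fail, and therefore propose to move to a generic $p$ or pass to a higher-order expansion. This is a misconception, and it is precisely the point of the paper that it is wrong. Zarhin showed that for \emph{certain} choices of orbits the differentials can be dependent at $z^n$; he did not show this for \emph{all} choices. The paper's Lemma~\ref{nondeg_l} proves that for \emph{every} period vector $\bfm$ and every index set $j_1<\dots<j_k$ one can pick periodic points $z_1,\dots,z_k$ of $z^n$ (of the given periods) so that the minor is nonzero \emph{already at first order at $z^n$}. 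No deformation, no higher-order term, no passage to a generic polynomial is needed.

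The mechanism you are missing is a degree-count. At $a=0$ the $(r,s)$ entry of the Jacobian equals $\tfrac{1}{z_r}P_{n,j_s,m_r}\!\bigl(\tfrac{1}{z_r}\bigr)$ for an explicit one-variable polynomial $P_{n,j,m}$ (Corollary~\ref{poly_cor}); for fixed $m$ the degrees $\deg P_{n,j,m}$ are pairwise distinct as $j$ ranges over $\{0,\dots,n-2\}$, and each is strictly smaller than the number $\hat\nu_n(m)$ of nonzero period-$m$ points of $z^n$ (Lemma~\ref{deg_nu_lemma}). One then argues by induction on $k$: expanding the determinant along the first row, view it as a polynomial $P(1/z_1)$; because the degrees in that row are distinct, the top-degree term comes from a single column, and its coefficient is (up to sign) the complementary $(k{-}1)\times(k{-}1)$ minor, nonzero for a suitable choice of $z_2,\dots,z_k$ by induction. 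Hence $P\not\equiv 0$, and since $\deg P<\hat\nu_n(m_1)$ there is a period-$m_1$ point $z_1$ with $P(1/z_1)\neq 0$. This ``distinct degrees $+$ pigeonhole on periodic points'' is exactly what controls the interference you were worried about; your proposed routes (a) and (b) would be both harder and unnecessary.
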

An immediate corollary from Theorem~\ref{main_th} is the following:
\begin{corollary}
For $n\ge 3$, the multipliers of any $n-1$ distinct periodic orbits considered as (multiple valued) algebraic functions on $\mcP^n$, are algebraically independent over $\bbC$.
\end{corollary}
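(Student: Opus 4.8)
The plan is to obtain the corollary as a routine consequence of Theorem~\ref{main_th} via the Jacobian criterion for algebraic independence. Fix $n-1$ distinct periodic orbits of a polynomial $p\in\mcP^n$, let $m_1,\dots,m_{n-1}$ be their minimal periods, put $\bfm=(m_1,\dots,m_{n-1})\in\bbN^{n-1}$, and consider the associated set $M^n_\bfm$ together with the multiplier map $\Lambda\colon M^n_\bfm\to\bbC^{n-1}$ (so here $k=n-1$). By Lemma~\ref{Mnkm_lemma} this set is independent of the chosen $p$ and its periodic points, by Remark~\ref{irred_rem} it is irreducible, and since $\pi\colon M^n_\bfm\to\mcP^n$ is a finite ramified cover of $\mcP^n\cong\bbC^{n-1}$ it has dimension $n-1$. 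The $n-1$ multipliers under consideration are precisely the component functions $\lambda_1,\dots,\lambda_{n-1}$ of $\Lambda$, which are algebraic functions on $\mcP^n$ (a priori multivalued there) and genuine rational functions on the irreducible variety $M^n_\bfm$.

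I would then apply Theorem~\ref{main_th} with $k=n-1$ (admissible since $n\ge 3\ge 2$ and $k\le n-1$) and with the full index tuple $(j_1,\dots,j_{n-1})=(0,1,\dots,n-2)$: the locus $Z=\{\det(d\Lambda/da)=0\}$ is an algebraic subset of codimension $1$ in $M^n_\bfm$, hence, by irreducibility of $M^n_\bfm$, a proper Zariski-closed subset. The ramification locus $B$ of $\pi$ is likewise a proper Zariski-closed subset. Therefore $M^n_\bfm\setminus(Z\cup B)$ is nonempty (in fact dense), and at any point $x_0$ in it the coefficients $a=(a_0,\dots,a_{n-2})$ form a local coordinate system and the Jacobian $d\Lambda/da$ is nonsingular; equivalently, the differentials $d\lambda_1,\dots,d\lambda_{n-1}$ are $\bbC$-linearly independent at $x_0$.

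To finish, I would invoke the standard Jacobian criterion: $n-1$ rational functions on an irreducible variety of dimension $n-1$ whose differentials are $\bbC$-linearly independent at some point are algebraically independent over $\bbC$ in the function field of that variety. Applied to $\lambda_1,\dots,\lambda_{n-1}$ on $M^n_\bfm$, this gives their algebraic independence over $\bbC$; and since any polynomial relation over $\bbC$ among the multipliers viewed as functions on $\mcP^n$ pulls back along $\pi$ to such a relation on $M^n_\bfm$, the multipliers themselves are algebraically independent over $\bbC$.

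Given that Theorem~\ref{main_th} is available, there is no serious obstacle in this deduction; the only steps needing a little care are the bookkeeping ones — making precise the passage between multivalued algebraic functions on $\mcP^n$ and honest functions on the irreducible cover $M^n_\bfm$, verifying that a non-ramification point avoiding $Z$ exists (this is exactly where irreducibility of $M^n_\bfm$ enters), and quoting the Jacobian criterion in a form valid over $\bbC$. If one prefers to avoid citing that criterion, the same conclusion follows from the classical minimal-degree argument: a nonzero $Q\in\bbC[x_1,\dots,x_{n-1}]$ of least degree with $Q(\lambda_1,\dots,\lambda_{n-1})\equiv 0$ on $M^n_\bfm$ must have some partial derivative $\partial Q/\partial x_\ell$ not identically zero along $\Lambda$, and differentiating the relation then exhibits a nontrivial $\bbC$-linear dependence among $d\lambda_1,\dots,d\lambda_{n-1}$ on a dense open subset, contradicting the nonsingularity established above.
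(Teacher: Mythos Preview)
Your deduction is correct and is precisely the standard argument the paper has in mind when it calls the statement ``an immediate corollary'' of Theorem~\ref{main_th}; you have simply spelled out the Jacobian criterion step that the paper leaves implicit. One small point worth stating explicitly is that the initial $n-1$ periodic orbits must be taken of multiplicity~$1$ (so that $(p,z_1,\dots,z_{n-1})$ lies on $M^n_\bfm$ in the sense of Definition~\ref{Mnm_def}), but this is already built into the paper's running conventions.
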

\begin{remark}\label{det_rem}
Since according to Remark~\ref{irred_rem}, $M^n_\bfm$ is an irreducible algebraic set, in order to prove Theorem~\ref{main_th}, it is sufficient for every $k$-tuple of distinct indexes $j_1,\dots, j_k$ to find a point in $M^n_\bfm$, at which 
$$
\det\left(\frac{d\Lambda}{d(a_{j_1},\dots a_{j_k})}\right)\neq 0.
$$
\end{remark}

\section{The strategy of the proof}
The key argument is formulated in the following lemma, which will be proved in Section~\ref{Jac_sec}.
\begin{lemma}\label{nondeg_l}
Assume that $k\le n-1$. Then for any $k$-tuple of distinct indexes $j_1,\dots, j_k$ satisfying $0\le j_1<j_2<\dots<j_k\le n-2$, and for any $k$-dimensional vector of periods $\bfm=(m_1,\dots,m_k)$ there exist corresponding periodic points $z_1,\dots,z_k$ of the polynomial $p_0(z)=z^n$, such that
\begin{equation}\label{jac1}
\det\left(\frac{d\Lambda}{d(a_{j_1},\dots a_{j_k})}(p_0,z_1,\dots,z_k)\right)\neq 0.
\end{equation}
\end{lemma}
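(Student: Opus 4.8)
The strategy is to compute everything at the very degenerate polynomial $p_0(z)=z^n$, where periodic points are explicit roots of unity and the iterate $p_0^{\circ m}(z)=z^{n^m}$ has an easily differentiable dependence on the coefficients. First I would fix, for each $j=1,\dots,k$, a periodic orbit of period $m_j$ for $p_0$: a point $z_j$ of period $m_j$ is a primitive $(n^{m_j}-1)$-th root of unity chosen so that $z_1,\dots,z_k$ lie on distinct orbits; the multiplier of the orbit of $z_j$ for $p_0$ is $(p_0^{\circ m_j})'(z_j)=n^{m_j}z_j^{n^{m_j}-1}=n^{m_j}$, independent of which root on the orbit we pick. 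Next I would set up the deformation: write $p_a(z)=z^n+\sum_{i=0}^{n-2}a_i z^i$, let $z_j(a)$ be the analytic continuation of the periodic point with $z_j(0)=z_j$ (possible since the orbits are non-multiple at $p_0$, as $(z^{n^{m_j}}-z)'=n^{m_j}z^{n^{m_j}-1}-1\ne 0$ at a root of $z^{n^{m_j}-1}=1$ when $n\ge 2$), and compute the entries of the Jacobian $\frac{\partial \Lambda_j}{\partial a_i}$ at $a=0$.

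The computational heart is a formula for $\frac{\partial}{\partial a_i}\bigl[(p_a^{\circ m_j}(z_j(a)))'\bigr]$ at $a=0$. Differentiating the chain rule for $(p_a^{\circ m_j})'(z)=\prod_{\ell=0}^{m_j-1}p_a'(p_a^{\circ \ell}(z))$ and using that at $a=0$ the orbit is $\{z_j, z_j^n, z_j^{n^2},\dots\}$ with all these points being the same primitive root $\zeta_j$ of $z^{n^{m_j}-1}=1$ up to powers, one gets an expression that is a sum over $\ell$ of terms involving $p_0''$, $p_0'$ evaluated along the orbit, the quantities $\frac{\partial}{\partial a_i}\bigl[p_a^{\circ \ell}(z_j(a))\bigr]$, and $z_j'(0)=\frac{\partial z_j}{\partial a_i}$; the latter is itself computed by implicitly differentiating $p_a^{\circ m_j}(z_j(a))=z_j(a)$. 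After simplification at the symmetric point $p_0(z)=z^n$ — where $p_0'(w)=nw^{n-1}$, $p_0''(w)=n(n-1)w^{n-2}$, and all orbit points are roots of unity — each entry $\frac{\partial \Lambda_j}{\partial a_i}(0)$ should come out to a monomial of the form (nonzero constant depending on $n,m_j$) times $\zeta_j^{\,e(i,m_j)}$ for an explicit exponent $e(i,m_j)$. I expect the $k\times k$ minor indexed by $j_1,\dots,j_k$ to then reduce, after pulling out the nonzero row factors, to a generalized Vandermonde / Cauchy-type determinant in the quantities $\zeta_j^{\,\text{(something)}}$, which is nonzero precisely because the $\zeta_j$ can be chosen on distinct orbits and the column indices $j_1<\dots<j_k$ are distinct — this is where the hypothesis $k\le n-1$ (so that $k$ distinct indices in $\{0,\dots,n-2\}$ exist) enters.

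The main obstacle, and the step I would budget the most care for, is precisely this last determinant nonvanishing: the exponents $e(i,m_j)$ depend on $i$ in a way that need not be linear, and different periods $m_j$ contribute roots of unity of different orders $n^{m_j}-1$, so the determinant is not literally Vandermonde. I would handle it by exploiting the remaining freedom: for a fixed tuple of periods and indices, one still gets to choose which primitive root on each orbit to use, and one can also use that the orbits are genuinely distinct. A clean way is to argue that the determinant, as a function of the chosen roots $\zeta_1,\dots,\zeta_k$ (each ranging over primitive $(n^{m_j}-1)$-th roots of unity, or more flexibly over formal variables subject to the period constraints), is a nonzero polynomial/Laurent expression — establish this by a degeneration or leading-term argument — and hence is nonzero for a generic, and therefore for some valid, choice. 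If instead the entries turn out to all be nonzero multiples of powers of a single root of unity tied to $m_j$, the minor factors as a product of a diagonal matrix and a matrix with distinct columns, and nonvanishing is immediate. Either way, once \eqref{jac1} holds at this one point, Remark~\ref{det_rem} and irreducibility of $M^n_\bfm$ finish Theorem~\ref{main_th}.
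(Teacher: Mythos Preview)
Your setup and the plan to compute the Jacobian entries by implicit differentiation at $p_0(z)=z^n$ are correct, but the structural expectation that each entry is a single monomial in $z_j$ is wrong, and the rest of your outline is built on it. The explicit computation (Lemma~\ref{comp_deriv_lemma} in the paper) gives
\[
\left.\frac{\partial \lambda_{z_j}}{\partial a_i}\right|_{a=0}
=(i\,n^{m_j-1}-n^{m_j})\sum_{\ell=0}^{m_j-1} z_j^{\,n^\ell(i-n)},
\]
a genuine sum of $m_j$ distinct negative powers of $z_j$. After factoring out $1/z_j$ from row $j$, the $(j,s)$ entry becomes $P_{n,j_s,m_j}(1/z_j)$ for an explicit nonzero polynomial $P_{n,j_s,m_j}$, not a power of $z_j$; so no Vandermonde or Cauchy structure appears, and the alternative you float (``entries are nonzero multiples of a single power, factor out a diagonal matrix'') simply does not occur for $m_j\ge 2$.

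What actually makes the determinant nonzero --- and what your ``leading-term'' instinct points toward without pinning down --- are two concrete facts you have not isolated. First, for fixed period $m$ the degrees $\deg P_{n,j_s,m}$ are pairwise distinct as $j_s$ runs over the chosen column indices (they equal $n^m-j_s n^{m-1}-1$ for $j_s\ge 1$ and $n^{m-1}-1$ for $j_s=0$). This lets one expand the determinant along the first row, pick the column of maximal degree, and by induction on $k$ choose $z_2,\dots,z_k$ so that its cofactor is nonzero; the determinant is then a \emph{nonzero} polynomial $P$ in $1/z_1$ of that maximal degree. Second --- and this step is entirely absent from your plan --- one needs a counting bound: $\deg P\le \max_s \deg P_{n,j_s,m_1}<\hat\nu_n(m_1)$, the number of nonzero period-$m_1$ points of $z^n$. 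Only this inequality guarantees that $P(1/z_1)$ cannot vanish at \emph{every} admissible $z_1$. Your ``generic choice'' fallback does not supply this, because the $z_j$ are constrained to lie in a fixed finite set of roots of unity, not a Zariski-dense family; a nonzero polynomial can certainly vanish on a prescribed finite set unless you control its degree.
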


Now we can complete the proof of Theorem~\ref{main_th} modulo the auxiliary results formulated earlier.
\begin{proof}[Proof of Theorem~\ref{main_th}]
Since this Jacobian in~\ref{jac1} is non-degenerate, the periodic points $z_1,\dots,z_k$ found in Lemma~\ref{nondeg_l}, belong to different periodic orbits, hence according to Lemma~\ref{Mnkm_lemma}, the point $(p_0,z_1,\dots,z_k)$ belongs to $M^n_\bfm$. Then Theorem~\ref{main_th} immediately follows from Remark~\ref{det_rem} and the result of Lemma~\ref{nondeg_l}.
\end{proof}

\section{Computation of derivatives}
Before we proceed with the proof of Lemma~\ref{nondeg_l}, we should learn how to compute the Jacobian:

Assume that $w\in\bbC$ is a non-multiple periodic point of period $m$ for a polynomial $p_{a_0}\in\mcP^n$. Then for all $a$ in some neighborhood of $a_0$ we have a periodic point $w(a)$ of the polynomial $p_a$, obtained by analytic continuation of the periodic point $w$. Thus, for all $a$ in a neighborhood of $a_0$ we can consider the corresponding multiplier $\lambda_w(a)=(p_a^{\circ m})'(w(a))$ of the periodic point $w(a)$.

\begin{lemma}\label{comp_deriv_lemma}
Let $z_0\neq 0$ be a periodic point of period $m$ of the polynomial $p_0(z)=z^n$. Then for any index $j$ satisfying $0\le j\le n-2$,
\begin{equation}\label{diff}
\frac{d\lambda_{z_0}(0)}{da_j}= (jn^{m-1}-n^m)\sum_{i=0}^{m-1}z_0^{n^i(j-n)}.
\end{equation}
\end{lemma}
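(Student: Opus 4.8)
The plan is to compute the multiplier $\lambda_{z_0}(a) = (p_a^{\circ m})'(w(a))$ and differentiate it with respect to $a_j$ at $a = 0$, where $p_0(z) = z^n$. First I would use the chain rule to expand the multiplier as a product: writing $w_0 = z_0$, $w_1 = p_a(w_0)$, $\dots$, $w_{m-1} = p_a^{\circ (m-1)}(w_0)$ for the points in the orbit, we have $(p_a^{\circ m})'(w(a)) = \prod_{i=0}^{m-1} p_a'(w_i(a))$. At $a = 0$ the orbit of $z_0$ under $z \mapsto z^n$ is $z_0, z_0^n, z_0^{n^2}, \dots, z_0^{n^{m-1}}$ (with $z_0^{n^m} = z_0$), and $p_0'(z) = nz^{n-1}$, so the multiplier at $a=0$ equals $\prod_{i=0}^{m-1} n z_0^{n^i(n-1)} = n^m z_0^{(n-1)(n^m-1)/(n-1)} = n^m z_0^{n^m - 1} = n^m$, using $z_0^{n^m} = z_0$ and $z_0 \ne 0$. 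Good: this reproduces the familiar fact that the multiplier of any period-$m$ orbit of $z^n$ is $n^m$.

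Next I would apply the logarithmic derivative. Since $\lambda_{z_0}(a) = \prod_{i=0}^{m-1} p_a'(w_i(a))$ is a product of nonvanishing factors near $a = 0$, we get
\begin{equation*}
\frac{1}{\lambda_{z_0}(0)}\frac{d\lambda_{z_0}(0)}{da_j} = \sum_{i=0}^{m-1} \frac{1}{p_0'(w_i(0))}\frac{d\,[p_a'(w_i(a))]}{da_j}\Big|_{a=0}.
\end{equation*}
For each $i$ the term $\frac{d}{da_j}[p_a'(w_i(a))]|_{a=0}$ splits by the chain rule into $\partial_{a_j}(p_a')(w_i(0)) + p_0''(w_i(0))\cdot \frac{dw_i(0)}{da_j}$. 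The first piece is immediate: $p_a'(z) = nz^{n-1} + \sum_\ell \ell a_\ell z^{\ell-1}$, so $\partial_{a_j}(p_a')(z)|_{a=0} = j z^{j-1}$, evaluated at $w_i(0) = z_0^{n^i}$.

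The remaining ingredient is the sensitivity of the orbit points to the parameter, i.e.\ $\frac{dw_i(0)}{da_j}$. Here I would set up the recursion coming from $w_{i+1}(a) = p_a(w_i(a))$: differentiating gives $\frac{dw_{i+1}(0)}{da_j} = p_0'(w_i(0))\frac{dw_i(0)}{da_j} + \partial_{a_j}p_a(w_i(a))|_{a=0} = n w_i(0)^{n-1}\frac{dw_i(0)}{da_j} + w_i(0)^j$, subject to the periodicity constraint $w_m(a) = w_0(a) = w(a)$ (analytic continuation of the periodic point), which together with $\frac{dw_0(0)}{da_j} = \frac{dw(0)}{da_j}$ closes the linear system and determines all the $\frac{dw_i(0)}{da_j}$. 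Solving this cyclic linear recursion — using $w_i(0) = z_0^{n^i}$, $n w_i(0)^{n-1} = n z_0^{n^i(n-1)}$, and the product telescoping to $n^m$ around the full cycle — is the main computational obstacle; I expect geometric-series identities in powers of $z_0$ and powers of $n$ to collapse the answer. Finally I would assemble the two contributions, multiply through by $\lambda_{z_0}(0) = n^m$, and simplify; the $j$-dependent and orbit-length-dependent factors should combine into the claimed closed form $(jn^{m-1} - n^m)\sum_{i=0}^{m-1} z_0^{n^i(j-n)}$. A useful sanity check is $j = n-1$ compared against known formulas, and the overall scaling $n^{m-1}(j - n)$ in front matching the two sources (direct $a_j$-dependence of $p'$ and the induced orbit motion).
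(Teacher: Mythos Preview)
Your approach is essentially identical to the paper's: it too writes the multiplier as $\prod_i p_a'(w_i(a))$, differentiates via the product/logarithmic rule, sets up and solves exactly the same cyclic recursion $\frac{dw_{i+1}}{da_j}=p_0'(w_i)\frac{dw_i}{da_j}+w_i^j$ using $w_i=z_0^{n^i}$ and periodicity, and then simplifies the resulting double sum (noting that $\sum_i z_0^{n^{s+i}(j-n)}$ is independent of $s$) to reach the stated formula. The only difference is that the paper carries out the algebra explicitly rather than leaving it as a plan.
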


Before we give a proof of Lemma~\ref{comp_deriv_lemma}, we can establish the following corollary:
\begin{corollary}\label{poly_cor}
For every positive integers $n$, $m$ with $n\ge 2$ and every index $j$ satisfying $0\le j\le n-2$, there exists a nonzero polynomial $P_{n,j,m}(z)$, such that if $z_0\neq 0$ is a periodic point of period $m$ of the polynomial $p_0(z)=z^n$, then
$$
\frac{d\lambda_{z_0}(0)}{da_j}= \frac{1}{z_0}P_{n,j,m}\left(\frac{1}{z_0}\right).
$$
Moreover,
\begin{equation}\label{deg_eq}
\deg P_{n,j,m}=
\begin{cases}
n^m-jn^{m-1}-1, &\text{for $1\le j\le n-2$,} \\
n^{m-1}-1, &\text{for $j=0$}.
\end{cases}
\end{equation}
\end{corollary}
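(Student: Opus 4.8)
The plan is to read the corollary straight off the formula~\eqref{diff} of Lemma~\ref{comp_deriv_lemma}, using only that a period-$m$ point $z_0\neq 0$ of $p_0(z)=z^n$ satisfies $p_0^{\circ m}(z_0)=z_0^{n^m}=z_0$, i.e.\ $z_0^{n^m-1}=1$. First I would set $u=1/z_0$. Since $0\le j\le n-2$, every exponent $n^i(j-n)$ appearing in~\eqref{diff} is strictly negative (in fact $\le -2$), so $z_0^{n^i(j-n)}=u^{n^i(n-j)}$ and
\[
\frac{d\lambda_{z_0}(0)}{da_j}=n^{m-1}(j-n)\sum_{i=0}^{m-1}u^{\,n^i(n-j)},
\]
which is a genuine polynomial in $u$ all of whose exponents are $\ge n-j\ge 2$. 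In particular exactly one factor of $u$ can be pulled out, producing the required shape $\tfrac1{z_0}P_{n,j,m}(1/z_0)$.

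For $1\le j\le n-2$ this already settles everything: take
\[
P_{n,j,m}(u)=n^{m-1}(j-n)\sum_{i=0}^{m-1}u^{\,n^i(n-j)-1}.
\]
The exponents $n^i(n-j)-1$ strictly increase with $i$, so the leading term comes from $i=m-1$ and has coefficient $n^{m-1}(j-n)=jn^{m-1}-n^m\neq 0$; hence $P_{n,j,m}$ is nonzero and $\deg P_{n,j,m}=n^{m-1}(n-j)-1=n^m-jn^{m-1}-1$, which is the first line of~\eqref{deg_eq}.

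The case $j=0$ is the only one that is not purely mechanical, and it is exactly where the periodicity relation is used. Here the exponents are $n^i(0-n)=-n^{i+1}$ for $i=0,\dots,m-1$, so the sum above equals $u^{n}+u^{n^2}+\dots+u^{n^{m-1}}+u^{n^m}$, and as it stands this would give $\deg P_{n,0,m}=n^m-1$; but the last term satisfies $u^{n^m}=u$ by periodicity, so after this single substitution the sum becomes $u+u^{n}+u^{n^2}+\dots+u^{n^{m-1}}$. Pulling out the factor $u$ and multiplying by $-n^m$ yields
\[
P_{n,0,m}(u)=-n^m\bigl(1+u^{n-1}+u^{n^2-1}+\dots+u^{n^{m-1}-1}\bigr),
\]
which is nonzero because its constant term is $-n^m$, and whose degree is $n^{m-1}-1$, the second line of~\eqref{deg_eq}. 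I do not expect any real obstacle here: the argument is bookkeeping with exponents, the only points requiring attention being this $j=0$ reduction and the degenerate case $m=1$, in which the inner sum collapses to a single term and both degree formulas are still readily verified.
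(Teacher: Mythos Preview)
Your argument is correct and is essentially the same as the paper's own proof: both split into the cases $1\le j\le n-2$ and $j=0$, read the polynomial directly off formula~\eqref{diff}, and in the $j=0$ case use the periodicity relation $z_0^{n^m}=z_0$ to replace the top exponent $n^m$ by $1$ before extracting the factor $1/z_0$. The paper's proof is terser but follows exactly the same steps, arriving at the identical polynomials $P_{n,j,m}$.
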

\begin{proof}
The existence of such polynomials immediately follows from~(\ref{diff}). When $j$ satisfies $1\le j\le n-2$, then
$$
\frac{d\lambda_{z_0}(0)}{da_j}= \frac{jn^{m-1}-n^m}{z_0}\sum_{i=0}^{m-1}\left(\frac{1}{z_0}\right)^{n^i(n-j)-1},
$$
thus we can choose $P_{n,j,m}(z)=(jn^{m-1}-n^m)\sum_{i=0}^{m-1}z^{n^i(n-j)-1}$ and then $\deg P_{n,j,m}=n^m-jn^{m-1}-1$.

When $j=0$, keeping in mind that $z_0^{n^m}=z_0$, we can rewrite~(\ref{diff}) in the following way:
$$
\frac{d\lambda_{z_0}(0)}{da_j}= \frac{-n^m}{z_0}\sum_{i=0}^{m-1}\left(\frac{1}{z_0}\right)^{n^i-1}.
$$
Then we choose $P_{n,j,m}(z)=-n^m\sum_{i=0}^{m-1}z^{n^i-1}$ and in that case $\deg P_{n,j,m}=n^{m-1}-1$.
\end{proof}

\begin{remark}\label{deriv_rem}
Notice that the $(r,s)$-th entry of the matrix in~(\ref{jac1}) is equal to the expression~(\ref{diff}) with $z_0=z_r$, $m=m_r$ and $j=j_s$. Hence, according to Corollary~\ref{poly_cor}, the $(r,s)$-th entry of the matrix in~(\ref{jac1}) can be represented as $\frac{1}{z_r}P_{n,j_s,m_r}\left(\frac{1}{z_r}\right)$.
\end{remark}
\begin{proof}[Proof of Lemma~\ref{comp_deriv_lemma}]
As before, given a vector $a=(a_0,\dots,a_{n-2})\in\bbC^{n-1}$, by $p_a(z)$ we denote the polynomial $p_a(z)=z^n+a_{n-2}z^{n-2}+a_{n-3}z^{n-3}+\dots+a_1z+a_0$.
Let $(w_0(a), w_1(a),\dots)$ be a periodic orbit of the polynomial $p_a$ with $w_i(a)=w_{i+m}(a)$ for all integers $i$ and $w_0(0)=z_0$. First we will compute the derivatives $\left.\frac{dw_i(a)}{da_j}\right|_{a=0}$. To simplify the notation, we will write $w_i=w_i(0)$.
Since $w_{i+1}(a)=p_a(w_i(a))$, we have
$$
\left.\frac{d w_{i+1}(a)}{d a_j}\right|_{a=0}= p_0'(w_i)\frac{d w_i}{d a_j}(0)+ w_i^j.
$$
Since $w_i'(a)=w_{i+m}'(a)$, from the previous equality it follows that
$$
\left.\frac{d w_{i}(a)}{d a_j}\right|_{a=0}=
\frac{\sum_{s=0}^{m-1}w_{i+s}^j\prod_{r=s+1}^{m-1}p_0'(w_{i+r})}
{1-\prod_{s=0}^{m-1}p_0'(w_{i+s})}.
$$
Now we remember that $w_{i+s}=p_0^{\circ s}(w_i)=w_i^{n^s}$ and $w_i$ is a periodic point of $p_0(z)$ of period $m$, so $w_i^{n^m}=w_i$. Since $w_0=z_0\neq 0$, we know that $w_i\neq 0$ for all $i\in\bbN$, hence $w_i^{n^m-1}=1$. Using this we get
$$
\left.\frac{d w_{i}(a)}{d a_j}\right|_{a=0}=
\frac{\sum_{s=0}^{m-1}w_i^{jn^s}\prod_{r=s+1}^{m-1}nw_i^{n^{r+1}-n^r}}{1-n^m}=
\frac{\sum_{s=0}^{m-1} n^{m-s-1} w_i^{n^m-n^{s+1}+jn^s}}{1-n^m} =
$$
$$
\frac{w_i\sum_{s=0}^{m-1} n^{m-s-1} w_i^{n^s(j-n)}}{1-n^m}=
\frac{w_0^{n^{i}}\sum_{s=0}^{m-1} n^{m-s-1} w_0^{n^{s+i}(j-n)}}{1-n^m}.
$$
Now we can compute $\frac{d \lambda_{z_0}(0)}{d a_j}$. Since
$$
\lambda_{z_0}(a)=p_a'(w_0(a))\dots p_a'(w_{m-1}(a)),
$$
by the derivative of a product formula we have
$$
\frac{d \lambda_{z_0}(0)}{d a_j}= \lambda_{z_0}(0)\sum_{i=0}^{m-1}\frac{p_0''(w_i)\left.\frac{d w_i(a)}{d a_j}\right|_{a=0}+jw_i^{j-1}}{p_0'(w_i)}=
$$
$$
n^m\sum_{i=0}^{m-1}\frac{n(n-1)w_i^{n-2}\left.\frac{d w_i(a)}{d a_j}\right|_{a=0}+jw_i^{j-1}}{nw_i^{n-1}}=
$$
$$
n^m\sum_{i=0}^{m-1}\frac{n(n-1)w_0^{n^i(n-2)}\cdot\frac{w_0^{n^i}\sum_{s=0}^{m-1} n^{m-s-1} w_0^{n^{s+i}(j-n)}}{1-n^m} +jw_0^{n^i(j-1)}} {nw_0^{n^i(n-1)}}=
$$
$$
n^{m-1}\left(\sum_{i=0}^{m-1}\frac{n(n-1)\sum_{s=0}^{m-1} n^{m-s-1} w_0^{n^{s+i}(j-n)}}{1-n^m} + jw_0^{n^i(j-n)}\right)=
$$
$$
jn^{m-1}\sum_{i=0}^{m-1}w_0^{n^i(j-n)} + \frac{n^m(n-1)}{1-n^m}\sum_{s=0}^{m-1} n^{m-s-1} \sum_{i=0}^{m-1}w_0^{n^{s+i}(j-n)}.
$$
Since $w_0^{n^{s+i}}=w_{s+i}$, and the sequence $w_0, w_1, \dots$ is periodic of period $m$, it follows that $\sum_{i=0}^{m-1}w_0^{n^{s+i}(j-n)}= \sum_{i=0}^{m-1}w_0^{n^{i}(j-n)}$. Hence
$$
\frac{d \lambda_{z_0}(0)}{d a_j}=
\left(jn^{m-1} + \frac{n^m(n-1)}{1-n^m}\sum_{s=0}^{m-1} n^{m-s-1} \right) \sum_{i=0}^{m-1}w_0^{n^i(j-n)}=
$$
$$
(jn^{m-1}-n^m)\sum_{i=0}^{m-1}z_0^{n^i(j-n)}.
$$
\end{proof}

\section{Non-degeneracy of the Jacobian}\label{Jac_sec}
In this section we give a proof of Lemma~\ref{nondeg_l}.

Let $\nu_n(m)$ denote the number of periodic points of the polynomial $p_0(z)=z^n$ with period $m$. Since this polynomial does not have multiple periodic points, the function $\nu_n(m)$ can be computed inductively by the formula
$$
n^m=\sum_{r|m}\nu_n(r),\qquad\text{or}\quad \nu_n(m)=\sum_{r|m}\mu(m/r)n^r,
$$
where the summation goes over all divisors $r\ge 1$ of $m$, and $\mu(m/r)\in\{\pm 1,0\}$ is the M\"{o}bius function.

It is easy to see from these formulas that 
\begin{equation*} 
\nu_n(m)\ge n^m-n^{m-2}, \quad\text{for }m\ge 3,\qquad\text{and}
\end{equation*}
$$
\nu_n(1)=n,\quad \nu_n(2)=n^2-n.
$$

Let $\hat\nu_n(m)$ denote the number of non-zero periodic points of the polynomial $p_0(z)=z^n$ with period $m$. Then, since zero is a fixed point of the polynomial $p_0$, it follows that $\hat\nu_n(m)=\nu_n(m)$, for $m>1$ and $\hat\nu_n(1)=\nu(1)-1$. Thus from the previous relations on $\nu_n(m)$ we obtain
\begin{equation}
\begin{array}{c}\label{nu_k3}
\hat\nu_n(m)\ge n^m-n^{m-2}, \quad\text{for }m\ge 3,\qquad\text{and} \\
\hat\nu_n(1)=n-1,\quad \hat\nu_n(2)=n^2-n.
\end{array}
\end{equation}
\begin{lemma}\label{deg_nu_lemma}
For every positive integer $n$, $m$ with $n\ge 2$ and every index $j$ satisfying $0\le j\le n-2$,
$$
\deg P_{n,j,m}<\hat\nu_n(m).
$$
\end{lemma}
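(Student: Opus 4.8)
The proof will be a short case analysis matching the explicit degrees from Corollary~\ref{poly_cor} against the lower bounds on $\hat\nu_n(m)$ recorded in~(\ref{nu_k3}). The one clean reformulation I would make first is to upgrade those bounds to the single uniform inequality
$$
\hat\nu_n(m)\ge n^m-n^{m-1}\qquad\text{for all }m\ge 1,\ n\ge 2.
$$
For $m\ge 3$ this is immediate from $\hat\nu_n(m)\ge n^m-n^{m-2}\ge n^m-n^{m-1}$, and for $m=1$ and $m=2$ it holds with equality, since $\hat\nu_n(1)=n-1$ and $\hat\nu_n(2)=n^2-n$.

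With this in hand the two cases of the degree formula~(\ref{deg_eq}) are disposed of separately. When $1\le j\le n-2$ (so in particular $n\ge 3$), Corollary~\ref{poly_cor} gives $\deg P_{n,j,m}=n^m-jn^{m-1}-1$, which is largest at $j=1$; hence
$$
\deg P_{n,j,m}\le n^m-n^{m-1}-1<n^m-n^{m-1}\le\hat\nu_n(m).
$$
When $j=0$ we have $\deg P_{n,0,m}=n^{m-1}-1$, and the required inequality $n^{m-1}-1<n^m-n^{m-1}$ is equivalent to $2n^{m-1}\le n^m$, i.e.\ to $n\ge 2$, which holds by hypothesis. This exhausts all values of $j$ and $m$.

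There is essentially no obstacle here: the entire content is the elementary estimate $\hat\nu_n(m)\ge n^m-n^{m-1}$, whose only mildly delicate point is that the convenient bound $\hat\nu_n(m)\ge n^m-n^{m-2}$ from~(\ref{nu_k3}) is asserted only for $m\ge 3$, so the periods $m=1$ and $m=2$ must be checked against the exact counts. If one prefers not to introduce the uniform bound, the same computation goes through by splitting into the three ranges $m=1$, $m=2$, $m\ge 3$ and using~(\ref{nu_k3}) directly in each; for $m\ge 3$ one then needs $n^{m-1}-1<n^m-n^{m-2}$ in the case $j=0$ (which reduces to $n\le n^2-1$) and $n^m-n^{m-1}-1<n^m-n^{m-2}$ in the case $j\ge 1$ (which reduces to $n^{m-2}<n^{m-1}+1$), both trivially true for $n\ge 2$.
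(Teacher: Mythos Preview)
Your proof is correct and follows exactly the approach the paper indicates: the paper's own proof is the one-line assertion that the inequality ``immediately follows from~(\ref{deg_eq}) and~(\ref{nu_k3}),'' and your argument simply spells out the elementary comparisons that make this immediate. The uniform bound $\hat\nu_n(m)\ge n^m-n^{m-1}$ is a clean way to package the three cases of~(\ref{nu_k3}), but either version you give is the intended verification.
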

\begin{proof}
The statement of the Lemma immediately follows from~(\ref{deg_eq}) and~(\ref{nu_k3}).
\end{proof}

\begin{proof}[Proof of Lemma~\ref{nondeg_l}]
We will give a proof by induction on $k$.

Case (1): $k=1$. 

If $z_1\neq 0$ is a periodic point of $p_0(z)$ of period $m_1$, then according to Corollary~\ref{poly_cor} and Remark~\ref{deriv_rem},
$$
\det\left(\frac{d\Lambda}{da_{j_1}}(p_0,z_1)\right)=
\frac{d\Lambda}{da_{j_1}}(p_0,z_1)=\frac{1}{z_1}P_{n,j_1,m_1}\left(\frac{1}{z_1}\right).
$$
Because of Lemma~\ref{deg_nu_lemma} it follows that the non-zero periodic point $z_1$ can be chosen in such a way that $P_{n,j_1,m_1}\left(\frac{1}{z_1}\right)\neq 0$, which implies that $\frac{d\Lambda}{da_{j_1}}(p_0,z_1)\neq 0$. This proves Case~(1).

Case (2): $k>1$.

Assume that $z_1,\dots, z_k\in\bbC$ are nonzero periodic points of corresponding periods $m_1,\dots,m_k$ for the polynomial $p_0(z)=z^n$. Then according to Corollary~\ref{poly_cor} and Remark~\ref{deriv_rem}, the matrix $\frac{d\Lambda}{d(a_{j_1},\dots,a_{j_k})}(p_0,z_1,\dots,z_k)$ can be written as
\begin{equation}\label{matr}
\frac{d\Lambda}{d(a_{j_1},\dots,a_{j_k})}(p_0,z_1,\dots,z_k)=
\left(
\begin{matrix}
\frac{1}{z_1}P_{n,j_1,m_1}\left(\frac{1}{z_1}\right) &\cdots & \frac{1}{z_1}P_{n,j_k,m_1}\left(\frac{1}{z_1}\right) \\
\cdots & \cdots & \cdots\\
\frac{1}{z_k}P_{n,j_1,m_k}\left(\frac{1}{z_k}\right) &\cdots & \frac{1}{z_k}P_{n,j_k,m_k}\left(\frac{1}{z_k}\right)
\end{matrix}
\right).
\end{equation}
Therefore the determinant of the matrix $\frac{d\Lambda}{d(a_{j_1},\dots,a_{j_k})}(p_0,z_1,\dots,z_k)$ can be expressed as
\begin{equation}\label{det_pol}
\det\left(\frac{d\Lambda}{d(a_{j_1},\dots,a_{j_k})}(p_0,z_1,\dots,z_k)\right) =\frac{1}{z_1}P\left(\frac{1}{z_1}\right),
\end{equation}
where $P\left(\frac{1}{z_1}\right)$ is a polynomial of $\frac{1}{z_1}$ that depends on $z_2,\dots, z_k$ as parameters. It is easy to see that 
$$
\deg P\le \max_{1\le i\le k} \deg P_{n,j_i,m_1},
$$
hence if for some choice of parameters $z_2,\dots, z_k$, polynomial $P$ is not identically zero, then according to Lemma~\ref{deg_nu_lemma}, there exists a nonzero periodic point $z_1$, for which $P\left(\frac{1}{z_1}\right)\neq 0$. Then~(\ref{det_pol}) implies that the Jacobian $\frac{d\Lambda}{d(a_{j_1},\dots,a_{j_k})}(p_0,z_1,\dots,z_k)$ is non-degenerate, which is what we need.

Now we use inductive argument to show that indeed one can choose parameters $z_2,\dots, z_k$ so that polynomial $P$ is not identically zero. Notice that the degrees of polynomials $P_{n,j_1,m_1}, \dots, P_{n,j_k,m_1}$ are all different, so for some index $i$, the degree of $P_{n,j_i,m_1}$ is maximal and greater than zero. Consider the minor of the matrix in~(\ref{matr}) obtained by deleting the first row and the $i$-th column. By inductive assumption there exist periodic points $z_2,\dots, z_k$ of corresponding periods $m_2,\dots, m_k$, such that for these periodic points the considered minor is non-degenerate. This implies that $\deg P=\deg P_{n,j_i,m_1}>0$, hence $P$ is not identically zero. This finishes the proof of Lemma~\ref{nondeg_l}.
\end{proof}

\section{Permutation of periodic orbits}\label{Perm_sec}
In This section we will give a proof of Lemma~\ref{Mnkm_lemma}. We start with two definitions:
\begin{definition}
For $n\ge 2$ we considering the set $\mcP^n_0\subset\mcP^n$ of polynomials defined in the following way: $\mcP^n_0=\{z^n+c\mid c\in\bbC\}$.
\end{definition}

\begin{definition}
For every positive integer $m$ by $X_m\subset\mcP^n$ denote the set of all polynomials $q\in\mcP^n$, such that under a small perturbation, $q$ can generate at least one new periodic orbit of period $m$. By $X_m'$ we denote the intersection $X_m'=X_m\cap\mcP^n_0$.
\end{definition}
\begin{remark}
It is not hard to show that $X_m$ is an algebraic subset of $\mcP^n$, however, for our purposes it is enough to notice that $X_m$ is contained in a codimension $1$ algebraic subset $Y_m\subset\mcP^n$ of those polynomials $q\in\mcP^n$, for which $q^{\circ m}$ has a multiple fixed point. Since $Y_m\cap\mcP^n_0$ is a finite set, $X_m'$ is also finite.
\end{remark}

\begin{proof}[Proof of Lemma~\ref{Mnkm_lemma}]
Consider a polynomial $p\in\mcP^n$ and its periodic points $z_1,\dots, z_k$ belonging to different periodic orbits of corresponding periods $m_1,\dots, m_k$. Consider the vector $\bfm=(m_1,\dots, m_k)$ and the corresponding set $M_\bfm^n=M_\bfm^n(p,z_1,\dots,z_k)$. It follows from Definition~\ref{Mnm_def} that in order to prove that the set $M_\bfm^n$ is independent of the initial choice of $(p,z_1,\dots,z_k)$, it is sufficient to show that for every polynomial $q\in\mcP^n$ and its non-multiple periodic points $w_1,\dots,w_k$ belonging to different periodic orbits of corresponding periods $m_1,\dots,m_k$, the point $(q,w_1,\dots,w_k)$ belongs to $M_\bfm^n=M_\bfm^n(p,z_1,\dots,z_k)$. In other words, it is sufficient to show that the point $(q,w_1,\dots,w_k)$ can be obtained from $(p,z_1,\dots,z_k)$ by analytic continuation of the periodic points $z_1,\dots,z_k$ along some curve $\gamma$ in $\mcP^n\setminus\cup_{j=1}^k X_{m_j}$ connecting the polynomials $p$ and $q$.

We will show that the curve $\gamma$ can be constructed from three adjacent pieces $\gamma_1$, $\gamma_2$ and $\gamma_3$. We choose $\gamma_1$ to be any curve in $\mcP^n\setminus\cup_{j=1}^k X_{m_j}$ that connects the polynomial $p$ with the polynomial $p_0(z)=z^n$. Similarly, $\gamma_3$ is any curve in $\mcP^n\setminus\cup_{j=1}^k X_{m_j}$ that connects the polynomial $p_0$ with the polynomial $q$. Analytic continuation of the periodic points $z_1,\dots,z_k$ along $\gamma_1$ produces some periodic points $x_1,\dots,x_k$ of the polynomial $p_0$, while analytic continuation of the periodic points $w_1,\dots,w_k$ along $-\gamma_3$ produces some (possibly different) periodic points $x_1',\dots,x_k'$. In order to complete the construction of the curve $\gamma$, we have to show that every point $(p_0,x_1',\dots,x_k')$ can be obtained from every point $(p_0,x_1,\dots,x_k)$ by analytic continuation along some loop $\gamma_2\subset\mcP^n\setminus\cup_{j=1}^k X_{m_j}$, where $x_1,\dots,x_k$ are periodic points of $p_0$ belonging to different periodic orbits of corresponding periods $m_1,\dots,m_k$ and similarly $x_1',\dots,x_k'$ are periodic points of $p_0$ belonging to different periodic orbits of corresponding periods $m_1,\dots,m_k$.

We will prove that the loop $\gamma_2$ can be chosen inside the set $\mcP^n_0\setminus\cup_{j=1}^k X_{m_j}'\subset\mcP^n\setminus\cup_{j=1}^k X_{m_j}$. First we notice that in the case when $m_1=m_2=\dots=m_k$, the existence of such loop $\gamma_2$ follows from the result of Schleicher~\cite{Schleicher}, which says that by choosing an appropriate element of the fundamental group $\pi_1(\mcP^n_0\setminus X_{m_1}, p_0)$ it is possible to realize any permutation of periodic orbits of $p_0$ of period $m_1$. Moreover, within each orbit of period $m_1$ any cyclic permutation of its points can be realized independently from permutations of other periodic points of the same period.

Finally, the case, when not all of $m_j$ are equal to each other, follows from the same result of Schleicher and the following proposition:

\begin{proposition}
For every two distinct positive integers $m_1\neq m_2$, the sets $X_{m_1}'$ and $X_{m_2}'$ are disjoint.
\end{proposition}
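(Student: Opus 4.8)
\emph{The plan} is to prove the equivalent statement that each polynomial $z^n+c\in\mcP^n_0$ lies in \emph{at most one} of the sets $X_m'$. The mechanism is that $z^n+c$ has a single critical point, namely $0$, hence at most one parabolic cycle, and the period together with the multiplier of that cycle pins down the unique $m$ for which $z^n+c\in X_m'$.

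Here is how I would run the argument. Write $g=z^n+c$ and suppose $g\in X_m'$. Then $g\in Y_m$ by the inclusion $X_m\subseteq Y_m$ recorded above, so $g^{\circ m}$ has a multiple fixed point $w$. Let $d\mid m$ be the exact period of $w$ as a periodic point of $g$ and let $\rho=(g^{\circ d})'(w)$ be its multiplier; by the chain rule $(g^{\circ m})'(w)=\rho^{m/d}$, and since $w$ is a multiple fixed point this equals $1$, so $\rho$ is a root of unity and the orbit of $w$ is a parabolic cycle. Any parabolic cycle of a polynomial attracts one of its critical points, and $0$ is the only critical point of $g$; hence $g$ has at most one parabolic cycle, and in particular the period $d$, the multiplier $\rho$ and the order $r$ of $\rho$ are functions of $c$ alone. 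A standard local analysis of the parabolic cycle shows that any periodic orbit of period $m$ that a small perturbation of $g$ acquires ``from nowhere'' must bifurcate off this parabolic cycle and must have period exactly $dr$. Since membership of $g$ in $X_m$ means precisely that such a genuinely new orbit of period $m$ appears under small perturbations, we conclude $m=dr$. Thus $m$ is determined by $c$, so $g$ belongs to $X_m'$ for at most one value of $m$; equivalently, $X_{m_1}'\cap X_{m_2}'=\varnothing$ whenever $m_1\neq m_2$.

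The step that requires real work is the local bifurcation claim: that a parabolic cycle of period $d$ whose multiplier has order $r$ produces, under perturbation, only orbits of period $dr$ — equivalently, that the inclusion $X_m\subseteq Y_m$, which on its own yields merely $r\mid(m/d)$ (that is, $dr\mid m$), can be sharpened to the equality $dr=m$. This is classical parabolic bifurcation theory, but it is the substantive ingredient; granting it, the uniqueness of the parabolic cycle for the unicritical family $z^n+c$ closes the argument immediately.
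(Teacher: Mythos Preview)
Your argument is correct and follows essentially the same route as the paper's: both reduce to the fact that a unicritical polynomial $z^n+c$ has at most one parabolic cycle (since each parabolic cycle attracts a critical point), and that a single parabolic cycle can generate new orbits of only one specific period under perturbation. You are more explicit than the paper in naming that period as $dr$ (with $d$ the period and $r$ the order of the multiplier), whereas the paper simply asserts that producing two distinct periods from one parabolic orbit ``is impossible''; but the underlying mechanism and the appeal to classical parabolic bifurcation theory are the same.
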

\begin{proof}
Assume the set $X_{m_1}'\cap X_{m_2}'$ is non-empty and $p\in X_{m_1}'\cap X_{m_2}'$. Then this means that polynomial $p$ has two (not necessarily distinct) periodic points $z_1$ and $z_2$ 
such that 
$$
p^{\circ m_1}(z_1)=z_1,\qquad p^{\circ m_2}(z_2)=z_2, \quad\text{and}
$$
$$
(p^{\circ m_1}(z_1))'=1,\qquad (p^{\circ m_2}(z_2))'=1.
$$
These conditions imply that the points $z_1$ and $z_2$ belong to parabolic periodic orbits of the polynomial $p$. Since the polynomial $p$ is of the form $p(z)=z^n+c$, it has only one critical point, hence, according to Corollary~10.11 in~\cite{Milnor}, it cannot have more than one parabolic periodic orbit. Thus $z_1$ and $z_2$ are from the same parabolic periodic orbit of $p$. Moreover, this periodic orbit generates new periodic orbits of two different periods $m_1$ and $m_2$ under a small perturbation of $p$ in $\mcP^n_0$. The latter is impossible.
\end{proof}

We finish the proof of Lemma~\ref{Mnkm_lemma} by splitting the periodic points $x_1,\dots,x_k$ into classes of points having the same periods. Within each class the necessary permutation of periodic points is realizable according to the above mentioned theorem of Schleicher. 
Since the sets $X_{m_j}'$ are disjoint for different values of $m_j$, this implies that the necessary permutations of periodic points can be obtained independently within each of the classes.
\end{proof}

\bibliography{mult}

\def\cprime{$'$}
\begin{thebibliography}{1}

\bibitem{Epstein_1}
A.~Epstein.
\newblock Transversality in holomorphic dynamics.
\newblock http://homepages.warwick.ac.uk/~mases/Transversality.pdf.

\bibitem{Milnor}
J.~Milnor.
\newblock {\em Dynamics in one complex variable}, volume 160 of {\em Annals of
  Mathematics Studies}.
\newblock Princeton University Press, Princeton, NJ, third edition, 2006.

\bibitem{Schleicher}
D.~Schleicher.
\newblock {\em Internal addresses in the {M}andelbrot set and irreducibility of
  polynomials}.
\newblock ProQuest LLC, Ann Arbor, MI, 1994.
\newblock Thesis (Ph.D.)--Cornell University.

\bibitem{Zarhin_1}
{\relax Yu}.~G. Zarhin.
\newblock One dimensional polynomial maps, periodic points and multipliers.
\newblock arXiv:1207.3055 [math.AG].

\end{thebibliography}
\bibliographystyle{abbrv} 

\end{document}